\theoremstyle{plain}
\newtheorem{theorem}{Theorem}
\numberwithin{equation}{section}
\newcommand{\LL}{\mathbb{L}}
\newcommand{\da}{\vcrossing} 
\newcommand{\mtd}{\medtriangledown}
\begin{document}

\title {Projective space: reguli and projectivity}

\date{}

\author[P.L. Robinson]{P.L. Robinson}

\address{Department of Mathematics \\ University of Florida \\ Gainesville FL 32611  USA }

\email[]{paulr@ufl.edu}

\subjclass{} \keywords{}

\begin{abstract}

We investigate an `assumption of projectivity' that is appropriate to the self-dual axiomatic formulation of three-dimensional projective space. 

\end{abstract}

\maketitle

\medbreak

\section{Introduction} 

\medbreak 

In the traditional Veblen-Young [VY] formulation of projective geometry, based on assumptions of alignment and extension, it is proved that a projectivity fixing three points on a line fixes all harmonically related points on the line. The Veblen-Young assumptions of alignment and extension are then augmented by a (provisional) `assumption of projectivity' according to which a projectivity that fixes three points on a line fixes all points on the line. See [VY] Chapter IV and Section 35 in particular. 

\medbreak 

In [R] we proposed a self-dual formulation of three-dimensional projective space, founded on lines and abstract incidence, with points and planes as derived notions. The version of the `assumption of projectivity' appropriate to this formulation of projective space should itself be self-dual. Such a version is ready to hand in [VY]: indeed, Veblen and Young note that (with alignment and extension assumed) their `assumption of projectivity' is equivalent to Chapter XI Theorem 1; and this theorem is expressed purely in terms of lines and incidence. 

\medbreak 

Our purpose in this brief note is to indicate some of the results surrounding this self-dual `assumption of projectivity', presenting them within the axiomatic framework of [R]. 

\medbreak 

\section{Framework} 

\medbreak 

We recall briefly the axiomatic framework of [R]. The set $\LL$ of {\it lines} is provided with a symmetric reflexive relation $\da$ of {\it incidence} satisfying AXIOM [1] - AXIOM [4] below. For convenience, when $S \subseteq \LL$ we write $S^{\da}$ for the set comprising all lines that are incident to each line in $S$; in case $S = \{ l_1, \dots , l_n \}$ we write $S^{\da} = [ l_1 \dots  l_n ]$. Further, when the lines $a, b \in \LL$ are not incident we call them {\it skew} and write $a \; | \; b.$ 

\medbreak 

$\bullet$ AXIOM [1]: For each line $l$ the set $l^{\da}$ contains three pairwise skew lines. 

\medbreak 

$\bullet$ AXIOM [2]: For each incident pair of distinct lines $a, b:$ \par
\indent \indent \indent \indent \indent [2.1]  the set $[a b]$ contains skew pairs of lines; \par 
\indent \indent \indent \indent \indent [2.2]  if $c \in [a b] \setminus [a b]^{\da}$ is one of such a skew pair then $[a b c]$ contains no skew pairs; \par 
\indent \indent \indent \indent \indent [2.3] if $x, y$ is a skew pair in $[ab]$ then $[a b] = [a b x] \cup [a b y].$

\medbreak 

$\bullet$ AXIOM [3]: If $a, b$ is an incident line pair and $c \in [a b] \setminus [a b]^{\da}$ then there exist an incident line pair $p, q$ and $r \in [p q] \setminus [p q]^{\da}$ such that $[a b c] \cap [p q r] = \emptyset.$

\medbreak 

$\bullet$ AXIOM [4]: Whenever $a, b$ and $p, q$ are pairs of distinct incident lines, 
$$(a \upY b) \cap (p \upY q) \neq \emptyset, \; \; \; 
(a \mtd b) \cap (p \mtd q) \neq \emptyset.$$

\medbreak 

Regarding this last axiom, we remark that on the set $\Sigma (a, b) = [a b] \setminus [a b]^{\da}$ (comprising all lines that are one of a skew pair in $[a b]$) incidence restricts to an equivalence relation having two equivalence classes, which we denote by $\Sigma_{\upY} (a, b)$ and $\Sigma_{\mtd} (a, b)$: the {\it point} $a \upY b = [a b c_{\upY} ]$ does not depend on the choice of $c_{\upY} \in \Sigma_{\upY} (a, b)$; likewise, the {\it plane} $a \mtd b = [a b c_{\mtd}]$ is independent of $c_{\mtd} \in \Sigma_{\mtd} (a, b)$. 

\medbreak 

Among its virtues, this self-dual axiomatization of projective space places points and planes on a manifestly equal footing and makes the principle of duality particularly transparent. 

\medbreak 

We adopt the usual notation and extended terminology regarding incidence as it relates to lines, points and planes. For instance, we say that the point $P$ and the plane $\pi$ are incident precisely when the intersection $P \cap \pi$ is nonempty (in which case this intersection contains more than one line); we may instead say that $P$ lies on $\pi$ or that $\pi$ passes through $P$. For another instance, points and/or planes are collinear precisely when they are on (that is, contain) one and the same line. 

\medbreak 

\section{Lemmata} 

\medbreak 

We assemble here some results that will be useful in what follows. The results themselves are standard and indeed may be found in [VY] Chapter 1: the first is the Corollary to Theorem 6 on page 20, the second is Theorem 9 on page 22 and the third is Exercise 1 on page 25;  but we offer proofs within the self-dual framework. Notice that Theorem \ref{lP} and Theorem \ref{lpi} are actually dual, so a proof of the one yields a proof of the other by duality; nevertheless, we elect to offer separate proofs for the sake of variety. 

\medbreak 

\begin{theorem} \label{lP}
If the line $l$ does not pass through the point $P$ then there exists a unique plane $Pl$ through both $l$ and $P$. 
\end{theorem}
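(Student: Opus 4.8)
\emph{Sketch of the intended proof.}

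For existence the idea is to find a point on $l$ and join it to $P$. By AXIOM [1] the set $l^{\da}$ contains a line $m \neq l$, so $l, m$ is an incident pair of distinct lines and, by AXIOM [2.1], $[l m]$ contains skew pairs; hence both $\Sigma_{\upY}(l,m)$ and $\Sigma_{\mtd}(l,m)$ are nonempty, and fixing $c \in \Sigma_{\upY}(l,m)$ we obtain a point $Q = l \upY m = [l m c]$. Since $l$ is incident to each of $l$, $m$, $c$, we have $l \in Q$, so $l$ passes through $Q$; as $l$ does not pass through $P$, the points $P$ and $Q$ are distinct, and AXIOM [4] supplies a line $u \in P \cap Q$. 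Then $u \neq l$ (because $u \in P$ but $l \notin P$), and $u, l$ are incident since both lie in the point $Q$ (AXIOM [2.2]); thus $u, l$ is an incident pair of distinct lines and, again by AXIOM [2.1], we may form the plane $Pl := u \mtd l = [u l c']$ with $c' \in \Sigma_{\mtd}(u,l)$. As $u$ and $l$ are each incident to $u$, $l$, $c'$, both lie in $Pl$; in particular $u \in P \cap Pl$, so $Pl$ passes through $P$, while $l$ lies on $Pl$.

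For uniqueness the key auxiliary fact, to be proved within the framework, is (A): \emph{a plane containing two distinct incident lines $s, t$ equals $s \mtd t$}. Indeed, AXIOM [2.2] shows that such a plane $\sigma$ has no skew pair and that every line of $\sigma$ is incident to both $s$ and $t$, whence $\sigma \subseteq [s t]$; since the skew pairs of $[s t]$ are precisely those with one member in each of $\Sigma_{\upY}(s,t)$, $\Sigma_{\mtd}(s,t)$, the pair-free set $\sigma$ meets only one of the two classes, and — ruling out the $\upY$-alternative because a plane is never contained in a point — one gets $\sigma \subseteq s \mtd t$; the reverse inclusion follows by symmetry, since $s, t \in \sigma \subseteq s \mtd t$ forces $s \mtd t \subseteq \sigma$ by the same argument. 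Granting (A) together with its point–plane dual (B): \emph{a line through two distinct points that both lie on a plane itself lies on that plane} — uniqueness is immediate. If $\pi$ is any plane with $l \in \pi$ that passes through $P$, pick $w \in P \cap \pi$; then $w \neq l$ and $w, l$ are incident (both lying in the plane $\pi$), so $\pi = w \mtd l$ by (A). Moreover $P$ and $Q$ are distinct points both lying on $\pi$ (via $w \in P \cap \pi$ and $l \in Q \cap \pi$), and $u$ passes through both, so $u \in \pi$ by (B); hence $u, l$ are distinct incident lines in $\pi$, and $\pi = u \mtd l = Pl$ by (A) once more.

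The substantive part is therefore (A), (B), and the non-degeneracy facts they rest on — that points and planes are distinct and never nested — which have to be extracted from AXIOM [3] and AXIOM [4]; fact (B) in turn is reduced to (A) and its dual by joining the two points by a line (AXIOM [4] again) and invoking the dual of ``two distinct planes have at most one common line'', itself a quick consequence of (A) and AXIOM [2.2]. Once these are in hand, everything above is routine verification of incidences from the definitions, and I expect the matching of $w \mtd l$ to $Pl$ — i.e. the bookkeeping that pins down a plane from any two of its incident lines — to be the only delicate point.
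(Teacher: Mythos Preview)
Your approach is correct but genuinely different from the paper's. For existence the paper produces \emph{two} points $X = l \upY x$ and $Y = l \upY y$ on $l$ (from a skew pair $x,y \in l^{\da}$), joins each to $P$ by lines $a,b$, and then invokes [R] Theorem 16 to conclude both that $l \in \Sigma_{\mtd}(a,b)$ and that $a \mtd b = [abl]$ is the desired plane; uniqueness falls out of the same citation, since any plane through $P$ and $l$ passes through the non-collinear triple $P,X,Y$ and [R] Theorem 16 makes that plane unique. Your route instead fixes a \emph{single} point $Q$ on $l$, joins $P$ to $Q$ by one line $u$, and takes $u \mtd l$ directly---a cleaner existence step---but then uniqueness costs you the auxiliary facts (A) (a plane containing two distinct incident lines $s,t$ equals $s \mtd t$) and (B) (a line through two points of a plane lies in that plane). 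These are precisely the content of [R] Theorems 16 and 15, which the paper treats as available black boxes; you have chosen to sketch them from the axioms. So the trade-off is: the paper buys brevity by leaning on a single imported ``three non-collinear points determine a plane'' theorem, while your construction is more elementary at the existence stage but redistributes the work into re-deriving (A) and (B). Your sketch of (A) is essentially right, though the ``reverse inclusion by symmetry'' line needs to be said more carefully: once $\sigma \subseteq s \mtd t$ and $\sigma$ meets $\Sigma_{\mtd}(s,t)$ in some $c'$, you get $s \mtd t = [s\,t\,c'] \subseteq \sigma$ directly from $s,t,c' \in \sigma$ and AXIOM [2.2], not by re-running the forward argument.
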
 

\begin{proof} 
AXIOM [1] furnishes skew lines $x$ and $y$ in $l^{\da}$. The points $P$, $X = l \upY x$ and $Y = l \upY y$ are not collinear, for $X \neq Y$ so ([R] Theorem 14) the only line through $X$ and $Y$ is $l$ which does not pass through $P$. Let $P \cap X = \{ a \}$ and $P \cap Y = \{ b \}$ (by [R] Theorem 14 again). According to [R] Theorem 16, $l \in \Sigma_{\mtd} (a, b)$ and the plane $a \mtd b = [a b l]$ passes through $l$ and through $P$ (which contains $a, b \in a \mtd b$). So much for existence; now for uniqueness. Let $\pi$ be a plane through $P$ and $l$: then $\pi$ passes through $P$, $X (\ni l)$ and $Y (\ni l)$; so [R] Theorem 16 forces $\pi = a \mtd b$. 
\end{proof} 

\begin{theorem} \label{lpi}
If the line $l$ does not lie in the plane $\pi$ then there exists a unique point on both $l$ and $\pi$. 
\end{theorem}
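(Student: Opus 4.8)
The framework of [R] is self-dual and Theorem \ref{lpi} is exactly the dual of Theorem \ref{lP} (interchange points and planes, replace $\upY$ by $\mtd$, and replace ``passes through'' by ``lies in''), so the plan is to transcribe the proof of Theorem \ref{lP} under this correspondence, pausing only where a little care is needed.

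Here is how I would carry it out. First apply AXIOM [1] to obtain skew lines $x, y \in l^{\da}$; as in Theorem \ref{lP}, neither of these equals $l$ (a line of $l^{\da}$ is incident to $l$ and so cannot be one of a pairwise skew triple in $l^{\da}$ while being $l$ itself), so $l, x$ and $l, y$ are distinct incident pairs and the planes $\xi = l \mtd x$ and $\eta = l \mtd y$ are well defined. These planes are distinct: were $\xi = \eta$, then $x$ and $y$ would both lie in it and hence be incident, contrary to $x \; | \; y$. By (the dual of) [R] Theorem 14, two distinct planes have a unique common line, so the only line lying in both $\xi$ and $\eta$ is $l$; since $l$ does not lie in $\pi$, it follows that $\pi$, $\xi$, $\eta$ are not collinear. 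Put $\pi \cap \xi = \{ a \}$ and $\pi \cap \eta = \{ b \}$, again by [R] Theorem 14. Now [R] Theorem 16, applied dually, yields $l \in \Sigma_{\upY}(a, b)$ together with the point $a \upY b = [a b l]$, which lies on $l$ and lies on $\pi$ (the latter since $a$ and $b$ are two distinct common lines of the point $a \upY b$ and the plane $\pi$); this settles existence. For uniqueness, any point $P$ lying on both $l$ and $\pi$ lies on $\pi$, on $\xi$, and on $\eta$ (the last two because each of those planes contains $l$), whence the dual of [R] Theorem 16 forces $P = a \upY b$.

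The step I expect to be the main obstacle is not geometric but bookkeeping: making sure the dual forms of [R] Theorem 14 and [R] Theorem 16 being invoked (``two distinct planes meet in a unique line'' and ``three non-collinear planes determine a unique common point, recoverable as $a \upY b$'') are genuinely licensed by the self-duality of the axiom system rather than smuggling in new content. One point worth isolating is that AXIOM [1] is applied to $l$ itself and not to any line of $\pi$, so the argument never has to produce a line of $\pi$ skew to $l$; the only properties of $\pi$ used are that $l$ does not lie in $\pi$ (which forces $\xi, \eta \neq \pi$ and hence non-collinearity, and also forces $a \neq b$) together with $a, b \in \pi$. Should a self-contained argument be preferred for the sake of variety, one could equally mirror the construction in Theorem \ref{lP} step by step without appeal to duality.
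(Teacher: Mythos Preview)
Your proof is correct: it is exactly the dualization of the paper's proof of Theorem~\ref{lP}, and the paper itself remarks that this route is available. However, the paper deliberately chooses a different argument ``for the sake of variety.'' Rather than constructing two auxiliary planes $\xi = l \mtd x$ and $\eta = l \mtd y$ through $l$ and invoking the dual of [R] Theorem~16 on the non-collinear triple $\pi, \xi, \eta$, the paper picks a single auxiliary line $m \in l^{\da}$, forms the one plane $l \mtd m$, intersects it with $\pi$ to obtain a line $n$, and exhibits the desired point as $l \upY n$; uniqueness is then argued directly by noting that two such points $P, Q$ would satisfy $P \cap Q = \{ l \}$ and hence (via [R] Theorems~14 and~15) force $l \in \pi$. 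Your approach has the virtue of making the duality explicit and recycling the machinery of Theorem~\ref{lP} wholesale, whereas the paper's approach is more economical (one auxiliary line and plane instead of two of each) and does not need to invoke the dual form of [R] Theorem~16.
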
 

\begin{proof} 
Choose (by AXIOM [1]) any line $m$ incident to $l$. The planes $\pi$ and $l \mtd m$ are distinct, because $l$ lies in the latter but not in the former; by [R] Theorem 14 it follows that $\pi \cap (l \mtd m)$ is a singleton, say $\{ n \}$. Now the point $l \upY n$ plainly lies on $l$ and lies on $\pi$ because $(l \upY n) \cap \pi$ contains $n$. This proves existence. If $Q$ were a second point on $l$ and $\pi$ then $P \cap Q$ would be the singleton $\{ l \}$ by [R] Theorem 14 and then [R] Theorem 15 would place $l$ on $\pi$ contrary to hypothesis. This proves uniqueness. 
\end{proof} 

\begin{theorem} \label{Puv}
If $u$ and $v$ are skew and $P$ is a point not on either, then there exists a unique line through $P$ meeting $u$ and $v$. 
\end{theorem}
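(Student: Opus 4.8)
The plan is to reduce everything to Theorem \ref{lP} and to the two basic facts from [R] already used above: that two distinct points (dually, two distinct planes) have exactly one line in common, and that the line joining two points of a plane lies in that plane. First I would invoke Theorem \ref{lP} twice, to form the plane $Pu$ through $P$ and $u$ and the plane $Pv$ through $P$ and $v$. These planes are distinct: were $Pu = Pv$, the skew lines $u$ and $v$ would both lie in a single plane, which is impossible because a plane contains no skew pair (AXIOM [2]). Exactly as in the proof of Theorem \ref{lpi}, [R] Theorem 14 (applied now to planes) then makes $Pu \cap Pv$ a singleton $\{ \ell \}$, and this $\ell$ will be the line we want.

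For existence I would verify three things about $\ell$. It passes through $P$: the point $P$ is incident with the two distinct planes $Pu$ and $Pv$, hence with their common line $\ell$ — this is the assertion dual to [R] Theorem 15. It meets $u$: the lines $\ell$ and $u$ both lie on $Pu$ and are distinct (since $\ell$ passes through $P$ while $u$ does not), and two distinct lines in a plane are incident; likewise $\ell$ meets $v$. For uniqueness, suppose $m$ is any line through $P$ meeting $u$ and $v$. Since $P$ lies off $u$ we have $m \neq u$, so the incident lines $m$ and $u$ meet in a point $Q$, which is distinct from $P$ and lies on $u$, hence on the plane $Pu$; as $m$ joins the distinct points $P$ and $Q$ of $Pu$, [R] Theorems 14 and 15 place $m$ on $Pu$. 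Symmetrically $m$ lies on $Pv$, so $m \in Pu \cap Pv = \{ \ell \}$ and $m = \ell$.

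I do not expect any serious geometric difficulty: the real work is incidence bookkeeping — faithfully translating `point on plane' and `line on plane' into the relation $\da$ and invoking the appropriate results from [R]. The one genuinely non-formal ingredient is the statement dual to [R] Theorem 15, that a point incident with two distinct planes is incident with their line of intersection; and the sole place where the skewness of $u$ and $v$ is used is in showing $Pu \neq Pv$. Keeping those two points straight is the crux of the argument.
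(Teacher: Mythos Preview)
Your proof is correct and follows essentially the same route as the paper's: form the planes $Pu$ and $Pv$ via Theorem~\ref{lP}, use skewness of $u,v$ with AXIOM~[2.2] to see $Pu \neq Pv$, take their unique common line via [R] Theorem~14, invoke AXIOM~[2.2] and the dual of [R] Theorem~15 for existence, and for uniqueness show any competing line lies in both planes by [R] Theorems~14 and~15. The only cosmetic difference is that the paper names the point $u \upY n$ directly rather than introducing a separate letter $Q$ for it.
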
 

\begin{proof} 
The planes $P u$ and $P v$ (Theorem \ref{lP}) are distinct: as $u$ and $v$ are skew, the equality $Pu = Pv$ would violate the `Pasch' property in AXIOM [2.2]. [R] Theorem 14 tells us that the  intersection $Pu \cap Pv$ contains a unique line $m$. AXIOM [2.2] ensures that $m \: (\in Pu \cap Pv)$ is incident to $u \: (\in Pu$) and to $v \: (\in Pv$). The dual of Theorem 15 in [R] ensures that $P$ lies on the line $m$ common to $Pu$ and $Pv$. To see uniqueness, let also $n \in P \cap [u v]$; of course, $n \notin \{ u, v \}$. The plane $P u$ contains the distinct points $P \: (\ni v)$ and $u \upY n \: (\nowns v)$ on $n$ and hence contains the line $n$ itself by [R] Theorem 15; $P v$ contains $n$ likewise, so $n \in Pu \cap Pv = \{ m \}$. 
\end{proof} 

\medbreak 

\section{Reguli} 

\medbreak 

In this section, we introduce the notion in terms of which our `assumption of projectivity' is expressed. 

\medbreak 

Let $u, v, w \in \LL$ be pairwise skew lines. We call the set $[u v w] : = \{ u, v, w \}^{\da}$ a {\it regulus} of which the lines $u, v, w$ are {\it directrices}. 

\medbreak 

\begin{theorem} \label{three}
If the lines $u, v, w \in \LL$ are pairwise skew, then $[u v w]$ is nonempty. 
\end{theorem}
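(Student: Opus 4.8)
The plan is to construct a transversal of $u,v,w$ by choosing a point on one directrix and then joining it to the other two. First I would use AXIOM [1] to select a line $x \in w^{\da}$ with $x \neq w$: this is possible because $w^{\da}$ contains three pairwise skew lines, and none of them can be $w$ itself (a line equal to $w$ is incident to $w$, hence not skew to the other two members of $w^{\da}$). Now $w$ and $x$ are distinct incident lines, so $P := w \upY x = [w\, x\, c_{\upY}]$ is a point (for any $c_{\upY} \in \Sigma_{\upY}(w,x)$), and $w \in P$ since $w$ is incident to each of $w$, $x$, $c_{\upY}$.

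Next I would check that $P$ lies on neither $u$ nor $v$. Were $u \in P = \{w, x, c_{\upY}\}^{\da}$, then $u$ would be incident to $w$, contradicting $u \; | \; w$; likewise $v \notin P$. Thus $P$ is a point on neither member of the skew pair $u, v$, and Theorem \ref{Puv} supplies a line $m$ with $m \in P$ that is incident to both $u$ and $v$.

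To finish, observe that $m \in P = [w\, x\, c_{\upY}]$ forces $m$ to be incident to $w$ as well. Hence $m$ is incident to each of $u$, $v$, $w$, i.e.\ $m \in \{u,v,w\}^{\da} = [u\,v\,w]$, so the regulus is nonempty.

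I do not expect a serious obstacle. The one step needing a little care is the assertion that $P = w \upY x$ is a legitimate point — equivalently that $\Sigma_{\upY}(w,x) \neq \emptyset$ — but this is immediate from AXIOM [2.1]: a skew pair in $[w\,x]$ lies in $\Sigma(w,x)$ and must straddle the two equivalence classes of incidence on $\Sigma(w,x)$, so both classes are nonempty. Everything else is routine unwinding of the definition of $\; ^{\da}$ together with the three skewness hypotheses, the only nontrivial external ingredient being Theorem \ref{Puv}.
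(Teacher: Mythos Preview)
Your proof is correct and follows essentially the same approach as the paper's: pick a point $P$ on $w$ via AXIOM~[1], observe that $P$ lies off $u$ and $v$ by skewness, then invoke Theorem~\ref{Puv} to obtain a line $m$ through $P$ meeting $u$ and $v$, which is automatically incident to $w$ since $m \in P$. Your version simply spells out details (why $x \neq w$, why $P$ is a legitimate point, why $P$ avoids $u$ and $v$) that the paper leaves implicit.
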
 

\begin{proof} 
Pick a line $p$ incident to $w$ by Axiom [1] and let $P = p \upY w$. As $P$ lies on neither $u$ nor $v$, Theorem \ref{Puv} provides us with a line $m$ through $P$ (hence incident to $w$) meeting $u$ and $v$. Thus $m$ lies in $[u v w]$ and so $[u v w]$ is nonempty. 
\end{proof} 

Remark: In fact, AXIOM [1] puts three skew lines $p_1, p_2, p_3$ in $w^{\da}$ and so places three points $P_1, P_2, P_3$ on $w$; the proof of Theorem \ref{three} then yields three lines $m_1, m_2, m_3$ in $[u v w]$. Further, recall from [R] Theorem 7 that distinct lines in a regulus are necessarily skew.

\medbreak 

Note that reguli do exist; indeed, each line is contained in some regulus. Let $l \in \LL$ be any line: AXIOM [1] provides three pairwise skew lines $u, v, w$ incident to $l$; symmetry of incidence then places $l$ in the regulus $[u v w]$. We can go further than this: each skew pair of lines is contained in some regulus. 

\begin{theorem} \label{two} 
If the lines $u, v$ are skew then there exist pairwise skew lines $l_1, l_2, l_3$ such that $u, v \in [l_1 l_2 l_3]$. 
\end{theorem}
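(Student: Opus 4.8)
The plan is to exhibit $u$ and $v$ as members of a single regulus; concretely, I would manufacture three pairwise skew lines $l_1, l_2, l_3$, each incident to both $u$ and $v$, for then symmetry of incidence gives at once $u, v \in \{ l_1, l_2, l_3 \}^{\da} = [l_1 l_2 l_3]$, which is a regulus precisely because the $l_i$ are pairwise skew.

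To build the $l_i$: AXIOM [1] applied to $u$ furnishes three pairwise skew lines $p_1, p_2, p_3$ in $u^{\da}$, hence three points $P_i = u \upY p_i$ on $u$; these are distinct, since $P_i = P_j$ would put $p_i$ and $p_j$ on a common point and so make them incident. Likewise AXIOM [1] applied to $v$ yields three distinct points $V_1, V_2, V_3$ on $v$. Since $u \; | \; v$, no point lies on both $u$ and $v$, so $P_i \neq V_i$, and I let $l_i$ be the unique line through $P_i$ and $V_i$ (that is, $P_i \cap V_i = \{ l_i \}$ by [R] Theorem 14). As $l_i$ and $u$ both belong to the point $P_i$ they are incident, and similarly $l_i \da v$ by way of $V_i$; thus each $l_i$ lies in $[u v]$.

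The crux — and the step I expect to demand the most care — is checking that $l_1, l_2, l_3$ are pairwise skew. Suppose $l_i \da l_j$ for some $i \neq j$. If $l_i = l_j$, then this single line passes through the distinct points $P_i$ and $P_j$ of $u$, so it must be $u$ itself; but $l_i$ also passes through $V_i$, which would then lie on both $u$ and $v$ — impossible. If instead $l_i \neq l_j$ yet still incident, then the plane $l_i \mtd l_j$ contains both lines, hence contains the distinct points $P_i, P_j$ and the distinct points $V_i, V_j$; by [R] Theorem 15 it therefore contains both $u$ and $v$, and two distinct lines lying in a common plane are incident, contradicting $u \; | \; v$. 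Hence $l_1, l_2, l_3$ are pairwise skew, and the reduction in the first paragraph completes the argument. Beyond the axioms and [R] Theorems 14--16, the only ingredients are the routine incidence facts — lines on a common point (or plane) are pairwise incident, and a plane on two distinct points of a line contains that line — already used freely in Section 2.
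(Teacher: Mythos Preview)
Your proof is correct and follows essentially the same construction as the paper: use AXIOM [1] to place three points on each of $u$ and $v$, join corresponding pairs to obtain $l_1, l_2, l_3 \in [uv]$, rule out $l_i = l_j$ by noting such a line would have to be $u$ yet meet $v$, and derive the skewness of the $l_i$ by showing that $l_i \da l_j$ would force $u$ and $v$ into the common plane $l_i \mtd l_j$. Your route to that last contradiction---via [R] Theorem 15 applied to the pairs $P_i, P_j$ and $V_i, V_j$---is actually a shade more direct than the paper's, which instead invokes [R] Theorem 4 and AXIOM [2.3] to place $u$ and $v$ in $l_i \mtd l_j$, but the underlying idea is identical.
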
 

\begin{proof} 
AXIOM [1] gives pairwise skew triples: $u_1, u_2, u_3$ incident to $u$ and $v_1, v_2, v_3$ incident to $v$. The points $u \upY u_3 \: (\ni u)$ and $v \upY v_3 \: (\nowns u)$ are distinct, so $(u \upY u_3) \cap (v \upY v_3)$ is a singleton by [R] Theorem 14; say $(u \upY u_3) \cap (v \upY v_3) = \{ l_3 \}$ with $(u \upY u_1) \cap (v \upY v_1) = \{ l_1 \}$ and $(u \upY u_2) \cap (v \upY v_2) = \{ l_2 \}$ likewise. Evidently, $u$ and $v$ lie in $[l_1 l_2 l_3]$ so we need only verify that $l_1, l_2, l_3$ are pairwise skew. First, they are distinct: if $l_1 = l_2$ then $u \upY u_1 \ni l_1 = l_2 \in u \upY u_2$ which with $(u \upY u_1) \cap (u \upY u_2) = \{ u \}$ would force $l_1 = l_2 = u$; but $l_1$ and $l_2$ meet $v$ whereas $u$ does not. Now, assume that $l_1$ meets $l_2$ (and aim at another contradiction). Certainly, $u, v$ is a skew pair in $[l_1 l_2]$. By [R] Theorem 4, the hypothesis $u \in l_1 \upY l_2$ would imply $u_1 \in u \upY l_1 = l_1 \upY l_2 = l_2 \upY u \ni u_2$  which contradicts AXIOM [2.2] together with the fact that $u_1$ and $u_2$ are skew; thus $u \in [l_1 l_2] \setminus (l_1 \upY l_2)$  and so $u \in l_1 \mtd l_2$ by AXIOM [2.3] while $v \in l_1 \mtd l_2$ in similar fashion. As AXIOM [2.2] prevents $l_1 \mtd l_2$ from containing a skew pair, we have a final contradiction. 
\end{proof} 

As a companion result, note that if $u, v$ is a skew pair in the regulus $[l_1 l_2 l_3]$ then there exists $w \in [l_1 l_2 l_3] \setminus \{ u, v \}$: indeed, $[l_1 l_2 l_3]$ contains at least three lines as noted after Theorem \ref{three}; at least one of these must be different from $u$ and $v$. Similarly, if $w \in [l_1 l_2 l_3]$ then there exists a (necessarily) skew pair $x, y \in [l_1 l_2 l_3] \setminus \{ w \}$. 

\medbreak 

Up to this point, there is no assurance that the regulus obtained by choosing as directrices three lines in the regulus $[u v w]$ is independent of the three lines that are chosen; we address this in the next section.

\medbreak 

\section{Projectivity} 

\medbreak 

We now introduce an `assumption of projectivity' that is appropriate to our self-dual axiomatization of projective space. The following is a restatement of Theorem 1 in Chapter XI of [VY]. 

\medbreak 

$\bullet$ AXIOM [P1]: Let $u, v, w$ be pairwise skew lines and let $a, b, c, d$ be lines in the regulus $[u v w]$. If a line meets three of $a, b, c, d$ then it meets the fourth. 

\medbreak 

Notice that this refers neither to points nor to planes but only to lines and (abstract) incidence; it therefore fits perfectly into our self-dual axiomatization of projective space. Consider also the following statement, with the same self-dual character. 

\medbreak 

$\bullet$ AXIOM [P2]: Let $u, v, w$ be pairwise skew and let $x, y, z \in [u v w]$ be distinct. If $l \in [u v w]$ and $m \in [x y z]$ then $l$ meets $m$. 

\medbreak 

Here, the distinct lines $x, y, z$ are themselves pairwise skew, as noted after Theorem \ref{three}. Accordingly, both $[u v w]$ and $[x y z]$ are reguli. 

\begin{theorem} 
AXIOM [P1] and AXIOM [P2] are equivalent. 
\end{theorem}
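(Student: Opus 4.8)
The plan is to prove each implication separately, treating $[u v w]$ and $[x y z]$ symmetrically and exploiting the fact (noted after Theorem~\ref{three}) that a regulus always contains at least three (pairwise skew) lines.

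First I would show that AXIOM [P1] implies AXIOM [P2]. Assume [P1] and take pairwise skew $u,v,w$ together with distinct $x,y,z \in [u v w]$. Since $x,y,z$ are pairwise skew, $[x y z]$ is a regulus and $u,v,w$ all belong to it (each of $u,v,w$ is incident to each of $x,y,z$ by definition of $[u v w]$). Now fix $l \in [u v w]$ and $m \in [x y z]$; I must show $l \da m$. Consider the four lines $x,y,z,m$ in the regulus $[x y z]$. The line $l$ meets $x$, $y$ and $z$ (because $l \in [u v w] = \{x,y,z\}^{\da}$ contains those incidences — more precisely $l$ is incident to $x,y,z$ since $x,y,z \in [uvw]$ forces $l\da x$ etc. by symmetry of $\da$). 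Hence $l$ meets three of the four lines $x,y,z,m$ lying in the regulus with directrices $x,y,z$, so by [P1] it meets the fourth, namely $m$. (A small point to check: [P1] as stated picks three particular directrices; here the directrices of $[x y z]$ are $x,y,z$ themselves, and $x,y,z,m$ are four lines of that regulus, so [P1] applies verbatim.) This gives $l \da m$ as required.

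Next I would show AXIOM [P2] implies AXIOM [P1]. Assume [P2]; take pairwise skew $u,v,w$ and four lines $a,b,c,d \in [u v w]$, and suppose a line $l$ meets $a$, $b$ and $c$; I must show $l \da d$. The lines $a,b,c$ are pairwise skew (distinct lines in a regulus are skew, [R] Theorem~7), so $[a b c]$ is a regulus, and $u,v,w \in [a b c]$ since $a,b,c\in[uvw]$. The hypothesis says $l$ is incident to $a,b,c$, i.e. $l \in \{a,b,c\}^{\da} = [a b c]$. Now apply [P2] with the triple of pairwise skew lines $u,v,w$, the distinct lines $a,b,c \in [u v w]$, the line $d \in [u v w]$ and the line $l \in [a b c]$: [P2] yields that $d$ meets $l$, i.e. $l \da d$. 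If instead $l$ meets, say, $a,b,d$ but we want incidence with $c$, the argument is identical after relabelling; so in all cases $l$ meets the fourth line.

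The routine bookkeeping — symmetry and reflexivity of $\da$, the identification $\{a,b,c\}^{\da}=[a b c]$, and the repeated use of [R] Theorem~7 to know that distinct lines of a regulus are skew (so that the various triples really are legitimate sets of directrices) — is what makes both directions go through. The only genuine subtlety, and the step I expect to require the most care in writing up, is making sure that the two axioms are applied to the correct reguli: in each direction one must recognise that choosing three lines of $[u v w]$ as new directrices produces a regulus that again contains $u,v,w$, so that [P1] (resp. [P2]) can be invoked with directrices drawn from whichever of the two reguli is convenient. Once that symmetry is made explicit, both implications are immediate.
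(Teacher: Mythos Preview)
Your direction [P2] $\Rightarrow$ [P1] is essentially the paper's argument, modulo the small omission that $a,b,c,d$ need not be distinct; the paper disposes of this with a single sentence (``We may assume $a,b,c,d$ distinct''), and you should too, since otherwise $[abc]$ need not be a regulus.

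Your direction [P1] $\Rightarrow$ [P2], however, contains a genuine error. You propose to apply [P1] to the regulus with directrices $x,y,z$ and to the four lines $x,y,z,m$ ``in the regulus $[xyz]$''. But the directrices $x,y,z$ are \emph{not} elements of $[xyz]=\{x,y,z\}^{\da}$: they are pairwise skew, so $x$ is not incident to $y$ and hence $x\notin[xyz]$. Relatedly, your assertion that $l$ meets $x,y,z$ is false: $l$ and $x$ both lie in $[uvw]$, and distinct lines of a regulus are skew (the very fact you quote from [R] Theorem~7), so $l\,|\,x$ unless $l=x$. The parenthetical ``$l\in[uvw]=\{x,y,z\}^{\da}$'' conflates a regulus with its conjugate; these are never equal.

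The paper's fix is simply to reverse the roles of $l$ and $m$: apply [P1] with directrices $u,v,w$ and with the four lines $x,y,z,l$, all of which genuinely lie in $[uvw]$. Since $m\in[xyz]$, the line $m$ meets the three lines $x,y,z$; [P1] then forces $m$ to meet the fourth line $l$. Once you see that it is $m$, not $l$, that plays the role of ``a line meeting three of the four'', the implication is immediate.
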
 

\begin{proof} 
$[P1] \Rightarrow [P2]$. Take the lines $x, y, z, l$ in the regulus $[u v w]$. Already, $m$ meets the three lines $x, y, z$; so $m$ meets the fourth line $l$. \par 
$[P2] \Rightarrow [P1]$. We may assume $a, b, c, d$ distinct. Say $l$ meets $a, b, c$: then $d \in [u v w]$ and $l \in [a b c]$ so $d$ meets $l$ as required. 
\end{proof} 

It is therefore legitimate to take either AXIOM [P1] or AXIOM [P2] as our `assumption of projectivity'. 

\medbreak 

We now derive some essentially standard consequences of this assumption, but working within the self-dual axiomatic framework. 

\medbreak 

\begin{theorem} \label{regeq} 
Let the lines $u, v, w$ be pairwise skew. If $x_1, y_1, z_1 \in [u v w]$ are distinct and $x_2, y_2, z_2 \in [u v w]$ are distinct, then $[x_1 y_1 z_1] = [x_2 y_2 z_2]$. 
\end{theorem}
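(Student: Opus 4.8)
The plan is to prove the two inclusions $[x_1 y_1 z_1] \subseteq [x_2 y_2 z_2]$ and $[x_2 y_2 z_2] \subseteq [x_1 y_1 z_1]$ separately; since the hypotheses are symmetric in the two triples, it suffices to establish the first. This will be a direct appeal to AXIOM [P2], which already packages almost the entire content of the statement.

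First I would fix an arbitrary line $m \in [x_1 y_1 z_1]$ and aim to show that $m$ is incident to each of $x_2$, $y_2$, $z_2$, which is exactly what membership in $[x_2 y_2 z_2] = \{ x_2, y_2, z_2 \}^{\da}$ requires. Since $x_1, y_1, z_1$ are distinct lines of the regulus $[u v w]$ and $x_2 \in [u v w]$, AXIOM [P2] --- applied with directrices $u, v, w$, with the distinct lines $x_1, y_1, z_1$ in the role of $x, y, z$, with $l = x_2$, and with this particular $m$ --- tells us that $x_2$ meets $m$. Repeating the argument with $l = y_2$ and then with $l = z_2$ shows that $y_2$ and $z_2$ also meet $m$. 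Hence $m \in [x_2 y_2 z_2]$.

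Because $m$ was an arbitrary element of $[x_1 y_1 z_1]$, this gives $[x_1 y_1 z_1] \subseteq [x_2 y_2 z_2]$; interchanging the roles of the two triples yields the reverse inclusion, and therefore $[x_1 y_1 z_1] = [x_2 y_2 z_2]$.

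I do not expect a genuine obstacle here. The only points needing care are checking that the hypotheses of AXIOM [P2] genuinely hold at each invocation (the triples $x_1, y_1, z_1$ and $x_2, y_2, z_2$ are, by assumption, triples of distinct lines of $[u v w]$) and recalling that for lines ``meets'' is precisely the incidence relation $\da$, so that the three assertions ``$x_2$ meets $m$'', ``$y_2$ meets $m$'', ``$z_2$ meets $m$'' are literally the statement $m \in [x_2 y_2 z_2]$. It may also be worth remarking at the outset that distinct lines of a regulus are pairwise skew (as noted after Theorem \ref{three}), so that $[x_1 y_1 z_1]$ and $[x_2 y_2 z_2]$ are honest reguli, though this fact is not actually invoked in the argument.
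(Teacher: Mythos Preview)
Your proof is correct and follows essentially the same route as the paper's: pick an arbitrary line in $[x_1 y_1 z_1]$, use the assumption of projectivity to show it meets each of $x_2, y_2, z_2$, then finish by symmetry. The only cosmetic difference is that the paper phrases each step in the language of AXIOM [P1] (listing $x_1, y_1, z_1, z_2$ as four lines of $[uvw]$ and concluding that a line meeting the first three meets the fourth), whereas you invoke the equivalent AXIOM [P2] directly.
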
 

\begin{proof} 
Let $l \in [x_1 y_1 z_1]$: then $l$ meets the first three of the lines $x_1, y_1, z_1, z_2 \in [u v w]$ and so (by our `assumption of projectivity') meets the last $z_2$; in like manner, $l$ meets $x_2$ and $y_2$. Thus $[x_1 y_1 z_1] \subseteq [x_2 y_2 z_2]$ and symmetry concludes the argument. 
\end{proof} 

The regulus {\it conjugate} to $[u v w]$ is the regulus $[x y z]$ well-defined by any choice of triple $x, y, z \in [u v w]$. Observe that $[u v w]$ is then the regulus conjugate to $[x y z]$ simply because $u, v, w$ is a triple in $[x y z]$. 

\medbreak 

Recall from Theorem \ref{two} that any two skew lines are contained in some regulus. The following is a standard further companion to this result. 

\begin{theorem} 
Two distinct reguli can have at most two lines in common. 
\end{theorem}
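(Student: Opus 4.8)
The plan is to argue by contradiction, leveraging the conjugate-regulus construction that Theorem \ref{regeq} makes legitimate. Suppose, contrary to the claim, that two distinct reguli $R$ and $R'$ shared at least three lines; choose three distinct lines $x, y, z \in R \cap R'$. Since distinct lines in a regulus are pairwise skew (as recalled after Theorem \ref{three}, following [R] Theorem 7), the lines $x, y, z$ are pairwise skew, so $[x y z]$ is itself a regulus.

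Next I would pass to directrices. Write $R = [u v w]$ and $R' = [u' v' w']$, each directrix triple consisting of pairwise skew (hence distinct) lines. From $x, y, z \in R = \{ u, v, w \}^{\da}$ and the symmetry of incidence we obtain $u, v, w \in \{ x, y, z \}^{\da} = [x y z]$, and in the same way $u', v', w' \in [x y z]$. Thus $u, v, w$ and $u', v', w'$ are two triples of distinct lines lying in the single regulus $[x y z]$; equivalently, each of $R$ and $R'$ is the regulus conjugate to $[x y z]$.

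The conclusion is then immediate from Theorem \ref{regeq}, applied with $[x y z]$ playing the role of $[u v w]$ there: we get $[u v w] = [u' v' w']$, that is $R = R'$, contradicting the assumed distinctness of $R$ and $R'$. Hence two distinct reguli can have at most two lines in common. I do not anticipate a real obstacle here; the argument is organizational, the one point needing care being the observation that $[x y z]$ is simultaneously a conjugate regulus of $R$ and of $R'$, so that the uniqueness already packaged into Theorem \ref{regeq} closes the argument.
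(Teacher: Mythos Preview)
Your argument is correct and follows essentially the same route as the paper's own proof: assume three common lines $x, y, z$, observe that the directrices of each regulus lie in $[x y z]$, and invoke Theorem~\ref{regeq} to conclude the two reguli coincide. The only differences are cosmetic --- you phrase it as a contradiction and spell out the conjugate-regulus interpretation, whereas the paper gives the contrapositive in one line.
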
 

\begin{proof} 
Let the reguli $[u_1 v_1 w_1]$ and $[u_2 v_2 w_2]$ have the three lines $x, y, z$ in common; then 
$$u_1, v_1, w_1 \in [x y z] \ni u_2, v_2 w_2$$
and so $[u_1 v_1 w_1] = [u_2 v_2 w_2]$ by Theorem \ref{regeq}. 
\end{proof} 

The following property of reguli is also standard; our `assumption of projectivity' appears here in the very definition of the conjugate. 

\begin{theorem} 
If the point $P$ lies on a line of the regulus $[u v w]$ then $P$ lies on a line of its conjugate regulus. 
\end{theorem}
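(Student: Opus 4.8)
The plan is to produce, through $P$, a line incident to three suitably chosen lines of the regulus $[uvw]$; such a line will automatically belong to the conjugate regulus.

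I would begin by recalling two facts. First, the regulus $[uvw]$ contains at least three lines (as noted after Theorem \ref{three}), and these are pairwise skew ([R] Theorem 7). Second, a point --- being of the form $[abc]$ as in AXIOM [2.2] --- contains no skew pair of lines, so that a point lying on one line of a regulus lies on no other line of that regulus.

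Now let $l \in [uvw]$ be a line through $P$, and choose $p, q \in [uvw]$ so that $l, p, q$ are distinct; by the first fact they are then pairwise skew, and by the second fact $P$ lies on neither $p$ nor $q$. Theorem \ref{Puv} therefore supplies a line $n$ through $P$ that meets both $p$ and $q$. Since $n$ and $l$ both pass through $P$, they are incident, so $n$ is incident to each of $l, p, q$; that is, $n \in [l p q]$. Because $l, p, q$ is a triple of lines drawn from $[uvw]$, the regulus $[l p q]$ is, by definition and Theorem \ref{regeq}, the regulus conjugate to $[uvw]$. Hence $n$ is a line of the conjugate regulus on which $P$ lies, as required.

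The only step that seems to me to need care is the observation that $n$ meets the remaining line $l$: this is immediate, since $n$ and $l$ share the point $P$, and in particular no appeal to the `assumption of projectivity' is made there. The rest is a matter of checking that the hypotheses of Theorem \ref{Puv} are met (which they are once $P$ is known to lie off $p$ and $q$) and of recalling the definition of the conjugate regulus; I foresee no real obstacle.
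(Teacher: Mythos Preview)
Your argument is correct and follows the same underlying idea as the paper's: choose three lines $l, p, q$ of the regulus with $P$ on $l$, and exhibit a transversal through $P$ to all three; this transversal then lies in the conjugate regulus $[l p q]$. The difference is only in packaging. You invoke Theorem \ref{Puv} directly to obtain a line $n$ through $P$ meeting $p$ and $q$, and then note that $n$ meets $l$ because both lie in the point $P$ (which by AXIOM [2.2] contains no skew pair). The paper instead builds the transversal by hand: it forms the plane through $P$ and one of the remaining lines, pierces that plane with the other to obtain a second point $Q$, and takes the line through $P$ and $Q$; incidence with all three lines is then checked via AXIOM [2.2] and [R] Theorem 15. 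Your version is shorter precisely because Theorem \ref{Puv} already encapsulates this plane-and-pierce construction, while the paper's version makes the supporting geometry explicit; neither route needs the `assumption of projectivity' beyond the definition of the conjugate regulus.
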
 

\begin{proof} 
Let $P$ lie on $l \in [u v w]$. As noted after Theorem \ref{two}, we may choose two lines $m, n \in [u v w]$ distinct from $l$; note that $[l m n]$ is then the regulus conjugate to $[u v w]$. As $l$ and $m$ are skew, AXIOM [2.2] places $P$ off $m$ (that is, $m \notin P$) and then Theorem \ref{lP} passes a unique plane $Pm$ through $P$ and $m$. As $m$ and $n$ are skew, AXIOM [2.2] places $n$ off $Pm$ (that is, $n \notin Pm$) and then Theorem \ref{lpi} situates a unique point $Q$ on $n$ and $Pm$. The points $P$ and $Q$ are distinct, as the former lies on $l$ and the latter on $n$; consequently, [R] Theorem 14 runs a unique line $r$ through $P$ and $Q$, and $r$ lies in $Pm$ by [R] Theorem 15.  The line $r$ meets $l \: (\in P$) and $n \: (\in Q$); AXIOM [2.2] also forces $r \in Pm$ and $m \in Pm$ to meet. As $P$ lies on the line $r$ in the conjugate regulus $[l m n]$ we are done. 
\end{proof} 

Dually, any plane that contains a line of a regulus also contains a line of the conjugate regulus.

\begin{center} 
{\small R}{\footnotesize EFERENCES}
\end{center} 
\medbreak 

[R] P.L. Robinson, {\it Projective Space: Lines and Duality}, arXiv 1506.06051 (2015). 

\medbreak 

[VY] Oswald Veblen and John Wesley Young, {\it Projective Geometry}, Volume I, Ginn and Company, Boston (1910). 

\medbreak

\end{document}